\title{Infinite sharply multiply transitive groups}
\author{Katrin Tent\footnote{Partially supported by SFB 878}}
\date{\today}
\newtheorem{theorem}{Theorem}[section]
\newtheorem{lemma}[theorem]{Lemma}
\newtheorem{corollary}[theorem]{Corollary}
\newtheorem{definition}[theorem]{Definition}
\newcommand{\nc}{\newcommand}
\nc{\inv}{^{-1}}
\nc{\Q}{\mathbb{Q}}
\nc{\R}{\mathbb{R}}
\nc{\F}{\mathbb{F}}
\nc{\C}{\mathcal{C}}
\nc{\G}{\mathcal{G}}
\nc{\M}{\mathcal{M}}
\nc{\U}{\mathbb{U}}
\nc{\Frl}{Fra\"iss\'e limit\xspace}
\nc{\Frls}{Fra\"iss\'e limits\xspace}
\nc{\fg}{finitely generated\xspace}
\renewcommand{\phi}{\varphi}
\DeclareMathOperator{\AGL}{AGL}
\DeclareMathOperator{\PGL}{PGL}
\DeclareMathOperator{\Sym}{Sym}
\newcommand{\Ind}{
 \setbox0=\hbox{$x$}\kern\wd0\hbox to 0pt{\hss$
 \mid$\hss}\lower.9\ht0\hbox to 0pt{\hss$\smile$\hss}\kern\wd0
}
\begin{document}
\maketitle
\begin{abstract}
The finite sharply $2$-transitive groups were classified by Zassenhaus
in the 1930's. They essentially all look like the group
of affine linear transformations $x\mapsto ax+b$ for some field (or at least \emph{near-field}) $K$. However, the question remained open whether the same is true for infinite sharply $2$-transitive groups. There has been extensive work on the structures associated to such groups indicating that Zassenhaus' results might extend to the infinite setting. For many specific classes of groups, like Lie groups, linear groups, or groups definable in o-minimal structures it was indeed proved that all examples inside the given class arise in this way as affine groups. However, it recently turned out that the reason for the lack of a general proof was the fact that there are plenty of sharply $2$-transitive groups which do not arise from fields or near-fields! In fact, it is not too hard to construct concrete examples (see below). In this note, we survey general sharply $n$-transitive groups and describe how to construct examples not arising from fields.\footnote{
Keywords: \emph{sharply $2$-transitive, free product, nearfield},
MS classification:  20B22}
\end{abstract}

\section{Historic background}

Groups often arise from symmetries of certain objects, like polyeders, geometries, manifolds etc. In these  examples, the groups automatically
come with an action on the underlying object and can thus be considered
as permutation groups. It is a natural question what restrictions occur in 
these groups and their corresponding actions and to characterize those
actions with a very high degree of symmetry.

One criterion to measure symmetry is the degree of \emph{transitivity}
of the group: recall that a group $G$ acting on a set $X$ is called
$n$-transitive if for any two $n$-tuples $(x_1,\ldots, x_n),(y_1,\ldots, , y_n)$
of \emph{distinct} elements from $X$ there is some $g\in G$ such that
$x_i^g=y_i, i=1,\ldots , n$. An $n$-transitive group action is \emph{sharply} $n$-transitive
if for any two such pairs of $n$-tuples this $g\in G$ is unique.

The following observations are easy, but important:

\begin{enumerate}
\item A group action of $G$ on $X$ is sharply $n$-transitive if
and only if for any $x\in X$, the \emph{stabilizer} $G_x=\{g\in G\colon x^g=x\}$ acts sharply $(n-1)$-transitively on $X\setminus\{x\}$.
\item If the action of $G$ on $X$ is sharply $n$-transitive, then
for any distinct elements $x_1,\ldots, x_n\in X$ the stabilizer
$G_{x_1,\ldots, x_n}$ is trivial.

\end{enumerate}

A sharply $1$-transitive action is also called \emph{regular}. It is easy to
see that,  after naming an element $x\in X$, a regular action induces a bijection
between the elements of $X$ and the elements of $G$: simply identify
$y\in X$ with the unqiue $g\in G$ such that $x^g=y$. After this identification
the action of $G$ on $X=G$ is nothing but the right regular action of the group
on itself by right multiplication. Conversely, right multiplication yields
a regular action of the group on itself. Thus we see that any regular action arises
in this way from right multiplication and there are no restrictions on the groups.
The situation changes drastically when one looks at higher degrees of transitivity.

Finite sharply $2$- and $3$-transitive groups were classified by Zassenhaus in \cite{Z1} and \cite{Z2} 
in the 1930's and were shown to arise
from so-called near-fields, as explained below. They essentially look like the groups
of affine linear transformations $x\mapsto ax+b$ or
Moebius transformations $x\mapsto \frac{ax+b}{cx+d}$, respectively.

For $n\geq 4$, the restrictions are even more severe:
Jordan proved in 1872 \cite{Jordan} that  apart from  
the symmetric and alternating group of degree $n$ and $n+2$, respectively,
the only finite sharply $n$-transitive groups are the Mathieu groups $M_{11}$ and $M_{12}$, which are sharply $n$-transitive for $n=4$ and $5$, respectively.

This was generalized by J. Tits and M. Hall, who proved that there are no infinite sharply $n$-transitive groups for $n\geq 4$.
However, it remained an open problem whether a classification similar to the one in the finite situation
holds for infinite sharply $2$- and $3$-transitive 
groups. Much literature
on this topic is available, see \cite{RST} for background and more recent references. 
In \cite{RST} the first construction of sharply $2$-transitive groups without nontrivial abelian normal subgroup is given. This was then extended in \cite{Tent3}
to sharply $3$-transitive actions.

These examples might inspire fresh interest in the classification problem and the aim of the article is to survey the background on sharply multiply transitive
permutation groups.

\section{A closer look at the classical examples}

Let us start with a closer look at the easiest examples:

{\bf The case $n=2$}.
Let $K$ be a (not necessarily commutative) field
and consider the group $G=\AGL(1,K)\cong K_+\rtimes K^*$ of affine
linear transformations  $x\mapsto ax+b, a,b\in K,a\neq 0$, which
acts sharply $2$-transitively on (the affine line of) $K$.
Then $G$ has a regularnormal subgroup $A\cong (K,+)$, i.e. a normal subgroup acting regularly on the underlying set,  consisting of the translations $x\mapsto x+b,b\in K$, and we see that the point stabilizer $G_0$
of $0\in K$ is the group of homotheties $x\mapsto ax,a\neq 0$, which
is obviously isomorphic to the multiplicative group $(K^*,\cdot)$ of the
underlying field. 

Since $A$ is normal in $G$, the elements of the point stabilizer $G_0$ act
by conjugation on the elements of $A$. The action of any $g\in G_0$ on $A$ is given by an automorphism of $A$. In other words, writing the group $A$ additively,
for all $n_1,n_2\in A, g\in G_0$ we have

\[(n_1+n_2)^g=n_1^g+n_2^g.\]

Thus, even without knowing that the group $G=\AGL(1,K)\cong A\rtimes G_0$ arises
from a field, the right distributivity appears automatically as a consequence just from the
group action.  For this distributivity, we do not even have to assume that the regular
normal subgroup in $G$ is abelian: if a sharply $2$-transitive group $G$ (acting on a set $X$)
contains a regular normal subgroup $N$, then for any $x\in X$ the point stabilizer 
$G_x$ acts by conjugation on the normal subgroup $N$ and since the sharp $2$-transitivity implies that $N\cap G_x=1$ 
 we necessarily have
\[ G\cong N\rtimes G_x.\]

Now for $g\in G_x, n_1,n_2\in N$ the action of $g$ on $N$ satisfies

\[(n_1\cdot n_2)^g=n_1^g\cdot n_2^g.\]

The point stabilizer $G_x$ thus acts as a group of automorphisms of $N$ which
is regular on the nontrivial elements of $N$. This already implies (among other things) that all elements of $N$ have the same order. 

What we have just seen is that any sharply $2$-transitive group $G$ which
has a regular normal subgroup gives rise to a \emph{near-field} in the sense of the following definition:

\begin{definition}
A (right) near-field is a structure $Q$, together with two binary operations, $+$ (addition) and $\cdot$ (multiplication), satisfying the following axioms:
\begin{enumerate}
\item   $(Q, +)$ is a group with identity element $0$ (we do
    not need to assume the group to be abelian);
\item $(Q\setminus\{0\},\cdot)$ is a group with identity element $1$;
\item $(a + b) \cdot c = a \cdot c + b \cdot c$ for all elements $a, b, c\in Q, c\neq 0$ (The right distributive law).
\end{enumerate}
 
\end{definition}

It is also easy to see that any near-field $Q$ gives rise to a sharply $2$-transitive group exactly in the same way in which $\AGL(1,K)$ arises from a 
field $K$ and that this group will again have a regular (and in fact abelian) normal subgroup
isomorphic to the additive group of $Q$. Namely for
$a,b\in Q, a\neq 0$, the group of transformations 
\[\phi_{a,b}\colon Q\longrightarrow Q, x\mapsto a\cdot x+b\]
acts sharply $2$-transitively on $Q$.
In this way, there is an one-to-one correspondence between near-fields and sharply $2$-transitive groups having a regular normal subgroup.

In the finite case, Zassenhaus proved that every sharply $2$-transitive group 
has a regular (and abelian) normal subgroup, but the question remained open whether this
remains true without the assumption of finiteness.

 It is interesting to note that while there are no
finite (proper) skew fields, finite near-fields do exist:

\noindent
{\bf Example:}    Let $K$ be the finite field of order $9$ and denote the field multiplication on $K$ by $*$. Using the action of the Frobenius automorphism $Frob_3: K\longrightarrow K \colon x\mapsto x^3$ we define a new binary operation `$\cdot$' on $K$ by the following rule:
For $a\in K$ we put  

$a \cdot b = a*b$     if $b\in K$ is a square, and
    
 $a\cdot b= Frob_3 (a)*b=a^3*b$ if $b\in K$ is not a square.

Then $K$ is a near-field with this new multiplication and the same addition as before.
    Near-fields arising in this way from a commutative field through twisting the multiplication by an automorphism of the field are called \emph{Dickson} near-fields.
    
In fact, Zassenhaus classified all finite near-fields by characterizing those finite (linear) groups that can act regularly on the set of non-zero vectors of a finite vector space.  He proved:

\begin{theorem}
Any finite sharply $2$-transitive group has a nontrivial abelian normal subgroup and thus
arises from a near-field.
All 
but seven finite near-fields are Dickson near-fields.
\end{theorem}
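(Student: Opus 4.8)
The plan is to treat the two halves of the statement separately. For the first half — every finite sharply $2$-transitive group $G$ on a set $X$ with $|X|=n$ has an abelian regular normal subgroup — I would first dispose of the trivial cases $n\le 2$ (where $G$ is cyclic of order $\le 2$ and arises from $\F_2$) and then observe that for $n\ge 3$ the group $G$ is a \emph{Frobenius group}: it is transitive, the point stabiliser $G_x$ is nontrivial (of order $n-1$), and by observation (2) of the introduction $G_x\cap G_x^{\,g}=G_{x,\,x^g}=1$ for every $g\notin G_x$, since then $x^g\neq x$. Hence, by Frobenius's theorem, $G$ possesses a Frobenius kernel $N\trianglelefteq G$; this $N$ is exactly the set of fixed-point-free elements together with the identity, it acts regularly on $X$, and $G=N\rtimes G_x$.

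Next I would show that $N$ is abelian. The map $\nu\mapsto x^\nu$ is a bijection $N\setminus\{1\}\to X\setminus\{x\}$ which intertwines the conjugation action of $G_x$ on $N\setminus\{1\}$ with the action of $G_x$ on $X\setminus\{x\}$, since for $g\in G_x$ one computes $x^{\,g^{-1}\nu g}=((x^{\,g^{-1}})^{\nu})^{g}=(x^{\nu})^{g}$, using $g^{-1}\in G_x$. By observation (1) the latter action is regular, so $G_x$ acts transitively on $N\setminus\{1\}$. Therefore all nontrivial elements of $N$ are conjugate in $G$; in particular they all have the same order, which is then forced to be a prime $p$, so $N$ is a $p$-group and $n=|N|=p^k$. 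The centre $Z(N)$ is nontrivial, characteristic in $N$, hence normal in $G$ and in particular $G_x$-invariant; as $Z(N)\setminus\{1\}$ is a nonempty $G_x$-invariant subset of $N\setminus\{1\}$ and $G_x$ is transitive there, we get $Z(N)=N$, i.e.\ $N$ is (elementary) abelian. By the correspondence recalled above, $G$ then arises from a finite near-field $Q$ whose additive group is $N\cong(\F_p^k,+)$ and whose nonzero multiplicative structure is given by $G_x$ acting as additive automorphisms of $N$.

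For the second half I would pass to this linear description. Writing $q=p^k=n$ and identifying $N$ with the additive group of $\F_q$, the multiplicative group $H:=G_x$ of the near-field is a subgroup of $GL_k(\F_p)=GL(N)$ acting \emph{regularly} on the set $N\setminus\{0\}$ of nonzero vectors, so $|H|=q-1$; classifying finite near-fields thus reduces to classifying such sharply transitive linear groups $H$. Being a Frobenius complement, $H$ is severely constrained: its Sylow subgroups are cyclic, except that a Sylow $2$-subgroup may be generalised quaternion, and by Zassenhaus's structure theorem for Frobenius complements $H$ is essentially metacyclic unless it involves a copy of $SL(2,5)$ (together with an explicit small extension related to $S_4$). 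Imposing in addition that $H$ act transitively on exactly $q-1$ vectors, and running through this short list, one finds that either $H$ is conjugate to the image of $\F_q^\ast$ possibly twisted by a field automorphism — which is precisely the Dickson case — or one lands in one of finitely many small exceptional configurations, and a finite case check shows these yield exactly seven near-fields, of orders $5^2,\ 7^2,\ 11^2$ (two of them), $23^2,\ 29^2,\ 59^2$, with multiplicative groups assembled from $SL(2,3)$ and $SL(2,5)$.

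The main obstacle is concentrated in the second half. Frobenius's theorem used above has (still) no proof avoiding character theory, but it may simply be cited; the genuine work is the classification of the sharply transitive linear groups. This requires both Zassenhaus's determination of \emph{all} finite Frobenius complements and then the delicate extraction of those acting regularly on the nonzero vectors of a vector space — in particular handling the nonsolvable family built on $SL(2,5)$ and verifying by hand that the exceptional near-fields are precisely seven. I would expect the bookkeeping in this last step to be by far the most laborious part of the argument.
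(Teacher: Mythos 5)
The paper does not prove this theorem: it is stated as Zassenhaus's classical result and simply cited (\cite{Z1}, \cite{Z2}), so there is no in-paper argument to measure your proposal against. Judged on its own, your outline is the standard route and is essentially correct. The first half is complete modulo Frobenius's theorem: the reduction to a Frobenius group, the identification of the kernel $N$ as a regular normal subgroup, the equivariance computation showing $G_x$ acts regularly on $N\setminus\{1\}$, and the deduction that $N$ is an elementary abelian $p$-group via the nontrivial centre are all sound. Two remarks. First, invoking Frobenius's theorem is a heavier hammer than necessary here; in the sharply $2$-transitive case one can obtain the regular normal subgroup by the elementary counting that the paper's Section~3 sets up: the involutions form a single conjugacy class, a count of point stabilizers shows $G$ has exactly $n-1$ fixed-point-free elements, and one checks that $tJ$ coincides with this set together with $1$ and is a subgroup --- this is exactly Neumann's criterion quoted in the paper, and it avoids character theory. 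Second, your treatment of the ``all but seven are Dickson'' half is a correct description of what must be done (classify Frobenius complements acting regularly on the nonzero vectors, isolate the twisted-field-multiplication case, and check the exceptional configurations built from $SL(2,3)$ and $SL(2,5)$, landing on the seven near-fields of orders $5^2$, $7^2$, $11^2$ twice, $23^2$, $29^2$, $59^2$), but it is a plan rather than a proof: the case analysis you defer is precisely the content of \cite{Z1}, and no shortcut is offered. For a survey-level citation of Zassenhaus this is acceptable; as a self-contained proof it is incomplete only in that final classification step, which you correctly flag as the laborious part.
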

   
This gives rise to the following

\bigskip\noindent
{\bf Question:} Does every sharply $2$-transitive group contain a regular normal subgroup? 

Before answering this question we introduce the characteristic of a sharply $2$-transitive group:

\section{The characteristic of a sharply $2$-transitive group}

Any sharply $2$-transitive group $G$ acting on a set $X$
contains plenty of involutions: for any $x,y\in X, x\neq y$, there is a unique
element $t$ swapping $x$ and $y$. Since swapping twice fixes $x$ and $y$, we
see from part 2. of the observation above that $t^2=1$ and hence $t$ is an involution.

Given two involutions $t_1,t_2$ consider pairs $x_1^{t_1}=y_1, x_2^{t_2}=y_2$ with $x_i\neq y_i, i=1,2$. By $2$-transitivity there is some $g\in G$ such that $x_1^g=x_2,y_1^g=y_2$. Thus the conjugate $t_1^g=g^{-1}t_1g$ of $t_1$ under $g$ will swap $x_2$ and $y_2$. Since $t_2$ is the only element from $G$ swapping $x_2$ and $y_2$ by
$G$ being \emph{sharply} $2$-transitive, we see that $t_1^g=t_2$.

This shows that all involutions in $G$ are conjugate, so  the set
\[ J=\{t\in G\setminus\{1\}\colon t^2=1\}\]
 of involutions in $G$ forms a single conjugacy class.

In fact, if $t_1,t_2$ both do not fix $x_1$, we could have chosen $x_1=x_2$ and concluded that all involutions that do not fix $x_1$ are conjugate under $G_{x_1}$.
 
In particular, it follows that either all involutions of $G$ have a fixed point (and if they do each involution has a unique fixed point) or no involution has a
fixed point.

Let us first consider the case that involutions in $G$ have fixed points.
\begin{lemma} If $G$ is sharply $2$-transitive on $X$ and involutions in $G$ have fixed points, there is an obvious bijection
\[J\longrightarrow X, t\mapsto Fix(t)\]
where $Fix(t)$ denotes the fixed point of $t$.
\end{lemma}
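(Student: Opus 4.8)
The plan is to exhibit the map $t\mapsto \Fix(t)$ and verify it is a bijection by constructing an explicit inverse. First I would check that the map is well-defined: by the discussion preceding the lemma, since some involution in $G$ has a fixed point and all involutions are conjugate, every involution has a fixed point, and by observation~2 (applied with $n=2$, using $t^2=1$) this fixed point is unique. So $\Fix$ is a genuine function from $J$ to $X$.

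For surjectivity, fix $x\in X$. I need an involution fixing $x$. Pick any $y\neq x$; there is a (unique) $t\in G$ swapping $x$ with some other point — more precisely, take $z\neq x$ and let $t$ be the unique element swapping $z$ and $z'$ for suitable points, but the cleanest route is: $G_x$ acts sharply $1$-transitively on $X\setminus\{x\}$ by observation~1, hence regularly, so $|G_x| = |X\setminus\{x\}|$ and in particular (as long as $|X|\geq 3$) $G_x$ is nontrivial; one then argues that $G_x$ contains an involution. Actually the sharper argument from the preceding paragraph is already available: for $z,w\in X\setminus\{x\}$ with $z\neq w$, the unique element swapping $z$ and $w$ either fixes $x$ or not; and the text has shown all involutions without a common fixed point behaviour is controlled. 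The most economical argument: take any involution $s\in J$ (these exist since $X$ has at least two elements) and any $g\in G$ with $\Fix(s)^g = x$ (possible by transitivity); then $s^g$ is an involution with $\Fix(s^g)=\Fix(s)^g = x$. This shows every $x\in X$ is hit, so the map is onto.

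For injectivity, suppose $t_1,t_2\in J$ with $\Fix(t_1)=\Fix(t_2)=x$. Then both $t_1$ and $t_2$ lie in $G_x$, which acts sharply $1$-transitively on $X\setminus\{x\}$ by observation~1. Pick any $y\in X\setminus\{x\}$; then $t_1$ is the unique element of $G_x$ sending $y$ to $y^{t_1}$, and likewise for $t_2$. Since $y\neq y^{t_i}$ (an involution fixing $x$ cannot fix a second point, by observation~2), I would like to conclude $t_1 = t_2$, but this requires knowing $y^{t_1}=y^{t_2}$. Instead: $t_1 t_2$ fixes $x$, and I want to show it is trivial. Note $t_1 t_2\in G_x$; if it is nontrivial it is again sharply $1$-transitive data, but it could a priori be a nontrivial element. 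The clean finish: the map $J\to X$ restricted to involutions fixing $x$ lands in $\{x\}$, so all such involutions lie in $G_x$; since an involution in $G_x$ fixes $x$ and exactly one point, and $G_x$ is sharply $1$-transitive on $X\setminus\{x\}$, any involution $u\in G_x$ is determined by the transposition it induces there, and two involutions of $G_x$ inducing transpositions with a point in common must coincide by sharpness — combining with the fact that all involutions of $G_x$ are conjugate in $G_x$ (shown in the text) forces $G_x$ to have a unique involution. Hence $t_1=t_2$.

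The main obstacle is injectivity: surjectivity and well-definedness follow almost immediately from the conjugacy of involutions and observations~1--2, but ruling out two distinct involutions with the same fixed point requires genuinely using sharp $2$-transitivity (mere $2$-transitivity is not enough — e.g. $\Sym(3)$ has three involutions all... no wait, those have distinct fixed points; the point is that in a merely $2$-transitive group a point stabilizer could contain several involutions). I expect the crispest proof to go through the observation that $G_x$, being sharply $1$-transitive on $X\setminus\{x\}$, is a \emph{regular} permutation group there, hence has at most one involution (a regular permutation group has at most one element of order $2$, namely the one corresponding to a central involution of the abstract group, since an order-$2$ element of a group acting regularly on itself has fixed-point-free square-root-free structure — more precisely, in the right regular action, $g$ has order $2$ iff $g^2=1$, and if $g,h$ both have order $2$ and $x^g = x^h$ for some $x$ then $g=h$, so there is at most one fixed-point-free involution but we need uniqueness which follows since if $g\neq h$ are both involutions then $gh\neq 1$ acts without fixed points, contradicting nothing directly — so the honest statement is: two distinct involutions in a regular group $G_x$ would give, via the text's conjugacy argument inside $G_x$, a contradiction with sharpness). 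I would spell this out carefully as the crux of the argument.
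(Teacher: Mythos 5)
Your well-definedness and surjectivity arguments are fine and essentially match the paper's (each involution has a unique fixed point by triviality of two-point stabilizers, and conjugating one involution around by transitivity hits every point). The gap is exactly where you locate it: injectivity, and none of your three attempts closes it. Conjugating two involutions $t_1,t_2$ with common fixed point $x$ by elements of $G_x$ gives $t_1^g=t_2$ for some $g\in G_x$, which says nothing since $G_x$ may be large; your step ``two involutions of $G_x$ inducing transpositions with a point in common must coincide'' would need $y^{t_1}=y^{t_2}$ for some common $y\neq x$, which is precisely what you cannot assume; and the parenthetical claim that a regular permutation group has at most one element of order $2$ is false --- $\Sym(3)$ in its right regular representation has three involutions, all conjugate and all acting as fixed-point-free involutions --- so regularity of $G_x$ on $X\setminus\{x\}$ together with conjugacy of its involutions cannot by itself force uniqueness. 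Your final sentence concedes that no contradiction has actually been derived.

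The missing idea, which is the paper's proof, is to conjugate by an element already known to fix a point \emph{different} from $x$. Pick $z\neq x$ and let $z_i$ be the point with $z_i^{t_i}=z$. By $2$-transitivity there is $h$ fixing $z$ with $z_1^h=z_2$; then $t_1^h$ swaps $z_2$ and $z$, so $t_1^h=t_2$ by sharpness. Now $x^{h^{-1}t_1h}=x^{t_2}=x$ gives $x^{h^{-1}t_1}=x^{h^{-1}}$, i.e.\ $t_1$ fixes $x^{h^{-1}}$; uniqueness of the fixed point of $t_1$ forces $x^{h^{-1}}=x$, so $h$ fixes both $x$ and $z$, hence $h=1$ by sharp $2$-transitivity and $t_1=t_2$. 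The essential trick is that the conjugator lives in $G_z$ for some $z\neq x$, so that once it is also forced into $G_x$ it must be trivial; conjugators taken from $G_x$ itself give no such purchase.
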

\begin{proof}
 Since the elements are conjugate, this map is certainly surjective.
It is also injective: suppose $t_1,t_2\in J$ have the same fixed point $x$, say.
Let $z\in X$ be different from $x$ and let $z_1^{t_1}=z=z_2^{t_2}$.
Now by $2$-transitivity there is some $h\in G_z$ fixing $z$ and with $z_1^h=z_2$.
Then as above considerations we have $t_1^h=t_2$.
From 
\[x^{h^{-1}t_1h}=x^{t_2}=x\]
we obtain
\[x^{h^{-1}t_1}=(x^{h^{-1}})^{t_1}=x^{h^{-1}}\]

showing that $t_1$ fixes $x^{h^{-1}}$. Since $t_1$ has $x$ as its
unique fixed point, this implies that
that $h^{-1}$ and hence $h$ must fix $x$. But then $h$ fixes two distinct points, $x,z\in X$, and so
$h=1$ by sharp $2$-transitivity.
\end{proof} 
Obviously, the map from $J$ to $X$ given above is $G$-equivariant where
the action of $G$ on $J$ is given by conjugation.
Thus in the case where involutions have fixed points we have the following
consequences:
\begin{corollary}
If $G$ is sharply $2$-transitive on $X$ and involutions in $G$ have fixed points,
then
\begin{enumerate}
\item every point stabilizer $G_x$ contains a unique involution $t$, and this
involution is central in $G_x$;
\item the set
of products of two distinct involutions
forms a conjugacy class.
\end{enumerate}
\end{corollary}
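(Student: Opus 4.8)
The plan is to read off both statements from the $G$-equivariant bijection $J\longrightarrow X$, $t\mapsto\Fix(t)$ of the Lemma, together with two facts already recorded above: every involution has a \emph{unique} fixed point, and all involutions are conjugate in $G$.

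For part 1, I would first note that for an involution $t$ the condition $t\in G_x$ (i.e. $x^t=x$) says exactly that $x$ is a fixed point of $t$, and since $t$ has only one fixed point this is equivalent to $\Fix(t)=x$. The injectivity half of the Lemma then yields at most one such $t$, and surjectivity yields at least one, so $G_x$ contains a unique involution $t$. For centrality, take any $g\in G_x$: then $g\inv t g$ is again an involution, and its fixed point is $\Fix(t)^g=x^g=x$, so by the uniqueness just proved $g\inv t g=t$. Hence $t$ commutes with every element of $G_x$.

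For part 2, let $P$ denote the set of products of two distinct involutions. I would first observe that $P$ is a union of conjugacy classes: conjugating $t_1t_2$ by $g$ gives $(g\inv t_1g)(g\inv t_2g)$, again a product of two involutions, and these remain distinct since conjugation is injective; moreover $t_1t_2=1$ would force $t_1=t_2\inv=t_2$, so no element of $P$ is trivial. It then remains to show $P$ is a \emph{single} class, i.e. that any two elements $t_1t_2$ and $s_1s_2$ of $P$ are conjugate. Put $x_i=\Fix(t_i)$ and $y_i=\Fix(s_i)$; by injectivity of the map in the Lemma, $x_1\neq x_2$ and $y_1\neq y_2$. Using $2$-transitivity, pick $g\in G$ with $x_1^g=y_1$ and $x_2^g=y_2$. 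Then $g\inv t_i g$ is an involution with fixed point $x_i^g=y_i$, whence $g\inv t_i g=s_i$ by the uniqueness in part 1, and therefore $g\inv(t_1t_2)g=s_1s_2$.

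I do not expect a real obstacle here, as everything is forced by the Lemma; the only points needing a moment's care are the identification of ``$t\in G_x$'' with ``$\Fix(t)=x$'' for involutions, which uses uniqueness of the fixed point rather than merely its existence, and the remark that $P$ contains no identity element, so that it is genuinely a conjugacy class and not the union of one with $\{1\}$.
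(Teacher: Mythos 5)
Your proof is correct and follows essentially the same route as the paper: both parts are read off from the $G$-equivariant bijection $J\to X$, $t\mapsto \Fix(t)$, with part 2 obtained exactly as in the text by choosing $g$ taking the pair of fixed points of $t_1,t_2$ to that of $s_1,s_2$ and invoking uniqueness. Your added checks (that $1\notin P$, that $P$ is closed under conjugation, and the centrality argument for part 1) are correct fillings-in of details the paper leaves implicit.
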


The second part follows since an element $g\in G$ taking the pair  $(x_1,y_1)$ of distinct elements of $X$ to the pair $(x_2,y_2)$ will conjugate the (ordered) pair of involutions with fixed points $x_1,y_1$ to the pair with fixed point $x_2,y_2$.

If involutions have no fixed points, then not only does the previous argument 
not work, but in fact we will see below that $J^2\setminus\{1\}$ does not necessarily
form a single conjugacy class.

In the group $\AGL(1,K)$ the involutions are exactly the elements of the form $x\mapsto -x+b$. Thus the set 
\[J^2=\{t_1t_2\colon t_1,t_2\in J\}\]
equals the set of translations $x\mapsto x+b$, which is isomorphic to the additive group of the field $K$. Note that if the field $K$ is of characteristic $2$, then
the involutions $x\mapsto -x+b$ are fixed point free (and commute pairwise).
This explains and motivates the following definition
\begin{definition}
Let $G$ be a  sharply $2$-transitive group.
\begin{enumerate}
\item If involutions in $G$ have fixed points,
 we say that  $G$ has characteristic~$p$ if the elements of
$J^2\setminus\{1\}$ have  order $p$ and characteristic  $0$ if their order is
infinite.

\item If involutions in $G$ are fixed point free, we  say that $G$ has characteristic~$2$.
\end{enumerate}
If $G$ is sharply $3$-transitive on $X$, then the characteristic of $G$ is that of a point stabilizer $G_x$, $x\in X$.
\end{definition}

Note that the characteristic of a sharply $2$-transitive group is necessarily a prime $p$ 
and the case $p=2$ occurs if and only if involutions are fixed point free (as otherwise two distinct involutions cannot commute). Note however, that if the
characteristic of $G$ is $2$, then the involutions in $G$ commute if and only
if $J^2$ is a (commutative) subgroup.

A result of B. Neumann \cite{Neumann} now gives a partial answer to the above question or
at least a clear criterion:
\begin{theorem}
If $G$ acts sharply $2$-transitively on the set $X$, then $G$ contains a regular normal subgroup $N$ if and only if for some (or any) involution $t\in G$ the set 
\[tJ:=\{tt_1\colon t_1\in J\}\]
is a subgroup. In this case we
have $N=tJ=J^2$ and the group is abelian. 
\end{theorem}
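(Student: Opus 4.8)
The plan is to prove the two implications separately, beginning with the ``if'' direction, which is characteristic-free and also yields the supplementary assertions. Suppose first that $N:=tJ$ is a subgroup for some involution $t$. Since $tt_1=(tt_1t)\,t$ with $tt_1t\in J$, one always has $tJ=Jt$, so closure under inverses is automatic and $1=tt\in N$; thus the content of the hypothesis is closure under products. I would then show $N\trianglelefteq G$: for $g\in G$ we have $N^g=(tJ)^g=t^gJ^g=t^gJ$ because $J$ is a single conjugacy class, and multiplying $N=tJ$ by $t$ gives $J=tN$, so $t^gJ=(t^gt)N$; since $tt^gt$ is an involution or $1$ it lies in $tJ=N$, hence $t^gt\in N$ and $N^g=N$. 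As $G$ is primitive (being $2$-transitive) and $N\neq\{1\}$ (it contains $tt_1$ for any involution $t_1\neq t$), $N$ is transitive. It is also semiregular: if $tt_1\in N$ fixes $x$ then $x^t=x^{t_1}$; if this common value is $x$ then $t,t_1\in G_x$ and by the corollary above $t=t_1$, while if it is some $y\neq x$ then $t$ and $t_1$ both equal the unique involution interchanging $x$ and $y$, so again $t=t_1$; either way $tt_1=1$. Hence $N$ is regular. Finally, from $J=tN$ we get $J^2=(tN)(tN)=t(Nt)N=t(tN)N=N$, and since every element $tn$ of $J=tN$ is an involution, $(tn)^2=1$ forces $tnt=n^{-1}$ for all $n\in N$; conjugation by $t$ is thus the inversion map on $N$, which is an automorphism only if $N$ is abelian.

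For the converse, suppose $G$ has a regular normal subgroup $N$, so $G=N\rtimes G_x$ with $G_x$ acting faithfully by conjugation on $N$ and regularly on $N\setminus\{1\}$; I carry this out in the case where involutions have fixed points (the fixed-point-free case is genuinely different, see below). Let $t$ be the unique involution of $G_x$. No element of $N\setminus\{1\}$ fixes a point, so $J\cap N=\emptyset$, whence every involution has image of order $2$ in $G/N\cong G_x$; as $G_x$ has a unique involution, all involutions lie in the coset $tN$, i.e.\ $J\subseteq tN$ and therefore $tJ\subseteq N$. Conversely, given $n\in N\setminus\{1\}$, let $s$ be the unique involution interchanging $x$ and $x^n$; writing $s=tn'$ with $n'\in N$ (possible since $s\in J\subseteq tN$), the equalities $x^n=x^s=x^{n'}$ together with regularity give $n'=n$, so $s=tn$ is an involution and $n=t(tn)\in tJ$. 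Hence $N\subseteq tJ$, so $tJ=N$ is a subgroup; then $J=tN$, and $J^2=N$ and the abelianness of $N$ follow as in the first part. For an arbitrary involution $t_1=t^g$ one has $t_1J=(tJ)^g=N^g=N$, so the criterion does not depend on the chosen involution.

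The main obstacle is the pair of inclusions $J\subseteq tN$ and $N\subseteq tJ$ in the forward direction. The first hinges on a point stabilizer containing a \emph{unique} involution, which is precisely what fails when involutions are fixed point free; indeed for $G=\AGL(1,K)$ with $K$ of characteristic $2$ the set $tJ$ need not even be closed, so in that regime one should phrase the criterion in terms of $J^2$ rather than $tJ$ — a point worth flagging. The second inclusion is where \emph{sharp} $2$-transitivity enters essentially: it is what lets us recognize each translate $tn$ as the unique involution interchanging $x$ and $x^n$, thereby forcing $tn\in J$.
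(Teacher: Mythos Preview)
The paper does not actually prove this theorem; it is quoted as a result of B.~Neumann and no argument is supplied. So there is no proof in the paper to compare yours against, and I can only comment on your argument on its own merits.

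Your ``if'' direction is sound and characteristic-free. There is one small slip in the normality step: you write that $tt^gt$, being an involution, lies in $tJ=N$, but what is true is that $tt^gt\in J$ (it is a conjugate of the involution $t^g$); the desired conclusion $t^gt\in N$ then follows since $t^gt=t(tt^gt)\in tJ=N$. Even more directly, $t^g\in J$ already gives $t^gt\in Jt=tJ=N$. The semiregularity argument is clean (and in fact works in characteristic~$2$ as well, since there the case $x^t=x$ simply does not occur), and the derivation of $J^2=N$ and of abelianness from $(tn)^2=1$ is correct.

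Your ``only if'' direction, under the hypothesis that involutions have fixed points, is also correct, and your flag about characteristic~$2$ is exactly right rather than a gap in your reasoning: for $G=\AGL(1,\mathbb{F}_4)$ one has $J=\{x\mapsto x+b:b\neq 0\}$, and for $t\colon x\mapsto x+1$ the set $tJ=\{1,\,x\mapsto x+\omega,\,x\mapsto x+\omega^2\}$ is not closed (the product of the two nontrivial elements is $x\mapsto x+1\notin tJ$), even though $G$ certainly splits. So the equivalence as literally stated does fail in characteristic~$2$; there the correct replacement is $J\cup\{1\}$ (equivalently $J^2$), as the paper itself intimates in the remarks surrounding the theorem. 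You are right to phrase this as a caveat on the statement rather than to try to force the argument through.
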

A group in which $tJ$ is a subgroup is called a \emph{split} sharply $2$-transitive
group. The split sharply $2$-transitive groups are special cases of \emph{Moufang sets}
and there is extensive literature on these as well, see e.g. \cite{dMW}.

Note that Neumann's result says in particular that addition in nearfields is always commutative. Thus, any regular normal subgroup of a sharply $2$-transitive group is
abelian. Conversely, any non-trrivial abelian normal subgroup of a $2$-transitive group is regular.
Therefore we may just ask whether any sharply $2$-transitive group contains a non-trivial abelian normal subgroup.

It is  easy to see that the set $tJ$ -- whether it forms a subgroup or not -- acts regularly on the set $X$,
where in slight abuse of terminology we call the action of a subset  of $G$ regular on $X$ if for any $x,y\in X$ there is a unique element in the subset
taking $x$ to $y$.

Together with the (multiplicative) action of a point stabilizer $G_x$ on the
set $tJ$ we obtain the concept of a near domain, weakening the assumptions
of a near-field:
\begin{definition}A structure $(D,+,\cdot)$ is a \emph{near-domain} if $(D^*,\cdot)$ is a group,
$(D,+)$ is a loop (so not necessarily associative) and the structure is
right-distributive.

\end{definition}

Note that a near-domain $(D,+,\cdot)$ is a \emph{near-field} if and only if $(D,+)$ is a group.

What we have just seen is that to \emph{any} sharply $2$-transitive group  $G$ -- whether split or non-split -- one can associate  a near domain $D$, so that $G$ arises from $D$ exactly as $\AGL(1,K)$ arises from the field $K$. Conversely, any near domain gives rise to a sharply $2$-transitive group.

The near-domain $D$ associated to a sharply $2$-transitive group $G$ is a near-field if and only if $G$ contains a regular normal subgroup.

So we can reformulate the above question and ask instead:

\medskip
\noindent
{\bf Question:} Is any near-domain a near-field?
\medskip

By Zassenhaus's work the answer is 'yes' for finite near-domains.
However, for  infinite sharply $2$-transitive groups, it was a long standing open problem 
whether or not they all split.
Here are some splitting results in certain special cases:
\begin{itemize}
\item
In \cite{Ti} J.~Tits proved that if $G$ is locally compact  connected, then $G$ splits.

\item
In \cite{W} it was shown that if $G$ is locally finite, then $G$ splits.

\item
In \cite{T2} it was shown that if $G$ is  definable in an o-minimal structure, then $G$ splits.

\item
In \cite{GlGu} it was shown that if $G$ is linear (with certain additional restrictions) then $G$ splits.

\item
In \cite{GMS} it was shown that if $G$ is locally linear (with some additional restrictions) then $G$ splits.

\item
In \cite[Thm.~9.5, p.~42]{Kerby}  it was shown that if the characteristic of $G$ is three, then $G$ splits (see also \cite{Tu}).

\item
In \cite{M} it was shown that if the exponent of the point stabilizer is $3$ or $6,$ then $G$ splits. 
\end{itemize}

(See also \cite{BN} for further splitting results.)
However, it turns out that the answer to the guiding question is 'no'!
Not every sharply $2$-transitive group has a non-trivial abelian normal subgroup!

\section{New examples in characteristic $2$}

It turns out that not only is the answer to the above question 'No', but in fact it is
very far from being even 'close to yes':
it is  shown in \cite{RST} that in the infinite case, \emph{any} group can be embedded into
a group that acts sharply $2$-transitively on an appropriate set. 

Suppose we have a group $G$ acting on a set $X$. How can we extend the
group action (extending both the group and the set if necessary)
in such a way that the given action is preserved and the extended action becomes sharply $2$-transitive?

Obviously, in order for this to  work we must necessarily have to assume that
two-point stabilizers are trivial as otherwise this resulting group
could not be sharply $2$-transitive. 

\begin{definition}
A group $G$ acting on a set $X$ is called a Frobenius group (action) if
the action is transitive, but not regular and no nontrivial element fixes two 
elements from $X$.
\end{definition}

This turns out to be the \emph{only} restriction!
In \cite{RST} we show

\begin{theorem}
Every Frobenius or regular permutation group which is \emph{not}
sharply $2$-transitive, and  whose involutions
act without fixed points has a non-split sharply $2$-transitive extension of characteristic $2$.
\end{theorem}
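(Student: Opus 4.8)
\medskip\noindent
The plan is to obtain the desired group as the union of an increasing chain $(G_n,X_n)_n$ of permutation groups, starting from $(G_0,X_0)=(G,X)$, with $G_n\le G_{n+1}$ and $X_n\subseteq X_{n+1}$ compatibly, in which every $(G_n,X_n)$ is \emph{admissible}: no nontrivial element of $G_n$ fixes two points of $X_n$, and no involution of $G_n$ fixes a point of $X_n$. The given action is admissible --- if regular, both conditions hold automatically; if Frobenius, they are exactly triviality of two-point stabilisers (part of being Frobenius) together with fixed-point-freeness of involutions (the standing hypothesis) --- and admissibility passes to increasing unions, since an element of $\bigcup_n G_n$ fixing two points of $\bigcup_n X_n$, or an involution fixing a point, must already do so at a finite stage. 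Hence the limit $(G^{*},X^{*})=\bigl(\bigcup_n G_n,\bigcup_n X_n\bigr)$ is again admissible, so that once its action is $2$-transitive it is automatically \emph{sharply} $2$-transitive, and, being admissible, it has fixed-point-free involutions and therefore characteristic $2$. The combinatorial task is thus to destroy, cofinally along the chain, every \emph{defect}: every pair $\bigl((a,b),(c,d)\bigr)$ of pairs of distinct points of the current set that is not yet joined by a group element. Degenerate defects with $a=c$ are included --- closing those is precisely how point stabilisers grow --- and the defects with $(c,d)=(b,a)$ are the ``swaps'', whose connecting elements are forced to be involutions, so that the limit genuinely acquires involutions.

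Each extension step resolves a single defect by a free product. Given an admissible $(H,Y)$ and a defect $\bigl((a,b),(c,d)\bigr)$, admissibility gives $H_{ab}=H_{cd}=1$, so we may form $H'=H\ast\langle t\rangle$ --- a free product, i.e.\ an amalgam over the trivial two-point stabilisers --- where $t$ is a fresh generator on which we impose $t^{2}=1$ exactly when $(c,d)=(b,a)$. One then extends the $H$-action on $Y$ to an $H'$-action on a set $Y'\supseteq Y$ by the natural construction of an $H'$-set that extends the $H$-set $Y$ and makes $t$ realise the prescribed partial map ($a^{t}=c$, $b^{t}=d$): concretely $Y'$ consists of reduced expressions $w\cdot y$ ($w$ a reduced word of $H'$, $y\in Y$) modulo the identifications forced by the $H$-action and by $a^{t}=c,\ b^{t}=d$. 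Since these gluings are tree-like there is no collapse, $Y\hookrightarrow Y'$ is an embedding of $H$-sets, and the chosen defect is resolved in $(H',Y')$. Working with a free factor keeps torsion under control: by the Kurosh subgroup theorem every torsion element of $H'$ is conjugate into $H$ or into $\langle t\rangle$, so $H'$ gains no new involution up to conjugacy except, in the swap case, the conjugates of $t$; and one checks that the point stabilisers of $Y'$ are free products of conjugates of the old point stabilisers $H_{y}$ (together with a conjugate of $\langle t\rangle$ when the closed defect kept a point fixed). Running these steps over a bookkeeping enumeration that dovetails between the ever-growing families of defects produces the chain; by the first paragraph its limit is a sharply $2$-transitive permutation group of characteristic $2$ containing $(G,X)$.

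The heart of the matter, and the step I expect to be the main obstacle, is the Key Lemma that each extension step preserves admissibility. The involution clause is the softer one: a torsion element lying in a point stabiliser of $Y'$ is, by the Kurosh theorem applied to the free-product description of that stabiliser, conjugate into some $H_{y}$ and hence fixes a point of $Y$, so by admissibility of $(H,Y)$ it is not an involution --- while $t$ itself is shown fixed-point-free on $Y'$ by a direct normal-form computation, and here it is essential that we chose a \emph{genuine} defect rather than an already-realised transition. The clause ``no nontrivial element fixes two points of $Y'$'' is the crux: it asserts that the family of point stabilisers of the enlarged action is still \emph{malnormal}, distinct members intersecting trivially, and this must be deduced from malnormality of the old family together with the freeness of $t$, by analysing fixed points on $Y'$ of reduced and cyclically reduced words of the amalgam --- cyclically reduced words of positive length act without fixed points, the remaining elements are conjugate into $H$ and inherit their behaviour there. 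This is a small-cancellation-flavoured argument over the free product (Bass--Serre normal forms, control of conjugacy), and getting its bookkeeping exactly right is the genuinely delicate part of the whole construction.

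It remains to arrange that $G^{*}$ is \emph{non-split}. By the theorem of Neumann recalled above, if $G^{*}$ splits then $J^{2}$ is a (normal) subgroup of $G^{*}$, whence --- by the remark recorded above --- all involutions of $G^{*}$ commute pairwise; so it suffices to produce two non-commuting involutions in $G^{*}$. This we obtain by letting the construction, at suitable stages, close the swap-defects of two disjoint pairs of ``fresh'' points: each such closure adjoins an involution lying in a new free factor, so the two involutions thereby introduced generate an infinite dihedral group and in particular fail to commute, and this persists to the limit because an embedding never decreases the order of $t_{1}t_{2}$. Checking that such closures genuinely arise along the dovetailing --- essentially that a pair of fresh points is never already swapped --- is routine. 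Hence $G^{*}$ is a non-split sharply $2$-transitive group of characteristic $2$ extending the given permutation group, as required.
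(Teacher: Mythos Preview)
Your outline is clear and the limit/bookkeeping architecture is sound, but the Key Lemma is false as you have set it up, and this is a genuine gap rather than missing bookkeeping. Suppose at some stage $(H,Y)$ already contains involutions $\iota_1,\iota_2\in H$ with $\iota_1$ swapping $a,b$ and $\iota_2$ swapping $c,d$, while no element of $H$ yet carries $(a,b)$ to $(c,d)$ --- this configuration is produced precisely by your own non-splitting step, where you close two independent swap-defects on fresh disjoint pairs. Since $(c,d)\neq(b,a)$ your recipe adjoins an infinite-order generator $t$ and forms $H'=H*\langle t\rangle$ with $a^t=c,\ b^t=d$. Then the cyclically reduced word $g=\iota_1\, t\,\iota_2\,t^{-1}\in H'$ is nontrivial of length $4$, yet
\[
a^{g}=a^{\iota_1 t\iota_2 t^{-1}}=b^{\,t\iota_2 t^{-1}}=d^{\,\iota_2 t^{-1}}=c^{\,t^{-1}}=a,
\]
and symmetrically $b^{g}=b$. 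So $(H',Y')$ is not admissible: your assertion that cyclically reduced words of positive length act without fixed points fails exactly here, and the claimed description of point stabilisers of $Y'$ as free products of conjugates of old stabilisers is wrong for the same reason.

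The paper's construction is designed around exactly this obstruction. It fixes once and for all a base pair $(x,y)$ swapped by a distinguished involution $t$, and makes the case distinction not on whether the defect is itself a swap but on whether the \emph{target} pair $(w,z)$ is already swapped: if it is not, a free generator $r$ is added, so that $r^{-1}tr$ becomes a fresh involution swapping $(w,z)$ with no conflict; if it is, one adds a generator $s$ that is required to \emph{commute with $t$}, so that the old and the new swap of $(w,z)$ coincide rather than producing a nontrivial two-point stabiliser as above. This is why the resulting group has the shape $(C_2\times F(R))*F(S)$ rather than an iterated plain free product. Your argument for non-splitness via two non-commuting involutions and Neumann's criterion is fine and matches the paper, but the extension step itself needs this extra commutation relation (or an equivalent device) before the Key Lemma can hold.
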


\noindent
Here, by an ``extension'' we mean an extension of both the given group and the
underlying set in such a way that the original group action on the original
set is preserved. As pointed out in the beginning,  any group acts regularly
on itself by right multiplication. So \emph{any} group is a subgroup of a sharply $2$-transitive group.

The idea of the extension is rather simple: we need to make the action $2$-transitive and we have to make sure that the Frobenius condition is satisfied.
In the case of a free completion of a projective plane  one
adds new lines joining two given points and new points as interesections for two given lines until in the limit we obtain the required projective plane.

Incidentally, the examples constructed in \cite{RST} show that in characteristic $2$
the set $J^2\setminus\{1\}$ of products of two distinct involutions does not necessarily form a single conjugacy class.

As a special case we may start with the cyclic group $C_2$ of order $2$ and obtain:

\begin{theorem}(see \cite{TZ} for the general case)
The group $G=(C_2\times F_\omega)*F_\omega$ acts sharply $2$-transitively in characteristic $2$ (on some appropriate set), where $F_\omega$ denotes the free group of countably infinite rank.
\end{theorem}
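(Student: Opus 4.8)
The plan is to obtain this as a special case of the preceding theorem, applied to a carefully chosen input, followed by an identification of the resulting group. Take $G_0=C_2\times F_\omega$ acting on itself by right multiplication. This action is regular, it is not sharply $2$-transitive since $|G_0|=\aleph_0>2$, and in a regular action every nontrivial element -- in particular every involution -- moves every point. So $G_0$ satisfies the hypotheses of the previous theorem, which therefore supplies a non-split sharply $2$-transitive group $G$ of characteristic $2$ acting on a countable set $X$, with $G_0\le G$ acting on $G_0\subseteq X$ in the original way. It then remains to show $G\cong(C_2\times F_\omega)*F_\omega$.

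For the identification one revisits the construction behind the previous theorem -- a recursive, Fra\"iss\'e-style amalgamation that builds $X$ as an increasing union $X_0=G_0\subseteq X_1\subseteq\cdots$ of countable sets and $G$ as an increasing union $G_0\le G_1\le\cdots$, each step adjoining new points together with a new group element realizing a prescribed pair-to-pair map, carried out in a suitably free (small-cancellation-type) fashion that preserves transitivity, the Frobenius condition, and the fixed-point-freeness of involutions. The feature that makes the present instance tractable is that all stabilizers $(G_0)_a$, $a\in X_0$, are \emph{trivial}, so that the amalgamation steps needed to connect pairs of old points impose no relations on the adjoined generators; organizing the recursion with care, one keeps each newly adjoined generator free over everything adjoined before it, and since only countably many generators are ever adjoined, $G$ comes out as $G_0*F_\omega=(C_2\times F_\omega)*F_\omega$. (Non-splitness is already part of what the previous theorem delivers; one also sees it directly, since by the Kurosh subgroup theorem a nontrivial abelian normal subgroup of such a free product would have to be infinite cyclic, forcing $G$ to be virtually cyclic, contrary to $G$ containing nonabelian free subgroups.)

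The main obstacle is exactly this bookkeeping. One has to run the amalgamation so as to maintain, at every stage and simultaneously, (i) the Frobenius condition together with fixed-point-freeness of involutions -- the technical heart already contained in the general theorem -- (ii) that every requirement $(a,b)\mapsto(c,d)$ with the pairs distinct eventually gets realized, so that in the limit the action is $2$-transitive and hence, by (i), sharply $2$-transitive, and (iii) -- the delicate point here -- the absence of any relation beyond those of a free product, so that the isomorphism type is pinned down precisely as claimed. Point (iii) is where the shape of the input matters: starting from $C_2\times F_\omega$ rather than, say, $C_2\times\mathbb{Z}$ provides an ambient free group of full countable rank which can absorb the freely adjoined generators, so that the ``new'' free factor is again free of rank $\omega$ while $C_2\times F_\omega$ survives as a free factor -- and indeed the centralizer of an involution already distinguishes $(C_2\times F_\omega)*F_\omega$ from $(C_2\times\mathbb{Z})*F_\omega$, so the choice of input cannot be finessed. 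The general version of this analysis, for arbitrary prescribed free factors, is the content of \cite{TZ}.
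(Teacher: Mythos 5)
Your overall strategy---run the extension construction starting from the regular action of $G_0=C_2\times F_\omega$ on itself and then identify the direct limit---could in principle be made to work, but the identification step as you describe it contains a genuine error, and it is located exactly at the point that makes the theorem true. You claim that because the point stabilizers $(G_0)_a$ are trivial, ``the amalgamation steps impose no relations on the adjoined generators,'' each new generator being ``free over everything adjoined before it,'' whence $G=G_0*F_\omega$. This is false. Suppose the base pair $(x,y)$ is swapped by the involution $t$ and you adjoin a generator $s$ sending $(x,y)$ to a pair $(w,z)$ that is \emph{also} swapped by $t$. (Such pairs genuinely arise and cannot all be reached by previously defined elements: once new points are added to make earlier generators everywhere defined, $t$ and its conjugates swap many new pairs lying outside the current orbit of $(x,y)$.) Then $s^{-1}ts$ swaps $w$ and $z$, so $s^{-1}tst$ fixes both $w$ and $z$, and the Frobenius condition forces $s^{-1}tst=1$, i.e.\ $[s,t]=1$. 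If $s$ were adjoined freely, $s^{-1}tst$ would be a nontrivial element with two fixed points and the construction would collapse at the very next step (equivalently, sharp $2$-transitivity would fail because $t$ and $s^{-1}ts$ would be two distinct elements swapping $w$ and $z$). So the adjoined generators come in two kinds: those forced to commute with $t$, which populate the $F_\omega$ inside the direct factor $C_2\times F_\omega$, and genuinely free ones, which form the second free factor. This dichotomy is precisely why the group has the shape $(C_2\times F_\omega)*F_\omega$; your sketch, treating all new generators uniformly as free, happens to land on an abstractly isomorphic group but describes a construction whose action is not sharply $2$-transitive, so the argument does not establish the statement.

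A secondary, methodological difference: the paper does not build the group by successive extensions and then recover its isomorphism type a posteriori. It fixes $G=(C_2\times F(R))*F(S)$ from the outset, with one family of generators commuting with $t$ and one family free, starts from a \emph{partial} action of this fixed group (seeded by $C_2$ acting on a small set, with infinitely many generators of each kind held in reserve), and extends the partial action step by step; the group is then the stated one by fiat and all the work goes into the action. Your closing remark on non-splitness via the Kurosh/normal-subgroup structure of a free product is essentially fine but peripheral to the statement being proved.
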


Here, the $*$ denotes the free product of two groups: any element in the free product can
be written as a word whose letters alternate between elements from the different factors
of the product.
 The construction uses \emph{partial actions}: we start with the cyclic group $G_0=C_2=\langle t\rangle$ generated by the involution $t$ and assume that it acts on a four element set $X_0$. We then extend both the group $G_0$ and the underlying set $X_0$ in steps. At any given point, the involution $t$ acts everywhere on the given set $X_0$, and we assume that there are
always infinitely many generators for the free groups that do not yet act anywhere on the set.
Let us fix infinite generating sets $R$ and $S$ for the free groups to write 
\[G=(C_2\times F(R))*F(S).\]

We here do much the same as in the case of a free completion of a projective plane: we fix one pair of distinct points $(x,y)$ of the original set. We may assume  that they are swapped by the involution $t$. In order to make the group action sharply $2$-transitive
we have to make sure that for any pair $(w,z)$ of distinct elements from $X$
there is a group element - and a unique one - taking $(x,y)$ to $(w,z)$.

If such a group element already exists, there is nothing to do. Otherwise
we distinguish two cases: if the pair $(w,z)$ is not yet swapped by an 
involution, we add a new element $r$ to the group in form of a generator for 
an extension of the original group by a new free factor  and we extend the group action by letting a previously unused $r\in R$ take the pair $(x,y)$ to the pair $(w,z)$.

If on the other hand the pair $(w,z)$ is already swapped by the 
involution $t$, we add a new element $s\in S$ to the group in form of a generator for 
an extension of the original group  and we ask that $s$ commutes with $t$.  We then again extend the group action by letting $s$ take the pair $(x,y)$ to the pair $(w,z)$.

We then also have to extend the underlying set $X_0$ in order to let the 
new generators from $R$ and $S$ be defined everywhere.
This is essentially all. The details can be found in \cite{RST} in a group
theoretic language and in \cite{TZ} in a more direct approach using partial group actions. 

Using the criterion by B. Neumann, it is  not hard to show that the resulting 
group does not contain an abelian normal subgroup. Thus,
the question raised above is answered negatively, at least in 
characteristic $2$. 
However, in characteristic $2$ we do not have an easy description of the set that the sharply
$2$-transitive group acts on. Nevertheless we may ask whether these sharply $2$-transitive groups can 
occur as the point stabilizer of some sharply $3$-transitive group.

\section{Sharply $3$-transitive groups}

As in the case of sharply $2$-transitive groups, we first take
a closer look at the standard example of a sharply $3$-transitive group:

\bigskip
\noindent
{\bf The case $n=3$:} For any commutative field $K$ the group
$G=\PGL(2,K)$ acts sharply
 $3$-transitively on the projective line of $K$ via $x\mapsto \frac{ax +b}{cx+d}$.
 
Note that the point stabilizer $G_\infty$ in $G$ is (isomorphic to)
the group $\AGL(1,K)$.
So as before we can ask what we need of  the group $\PGL(2,K)$ to make a
sharply $3$-transitive group. As point stabilizers of sharply $3$-transitive
groups  are sharply $2$-transitive, we know from the previous section that
there is a near-domain associated to any sharply $2$-transitive group and
hence -- via point stabilizers - also to any $3$-transitive group.
By \cite{Kerby} Section 11, a near-domain $D$ gives rise to
a sharply $3$-transitive group if and only if there is an
 an involutory automorphism $\sigma$ of the multiplicative group $(D^+,\cdot)$ of $D$
which satisfies the functional equation

\[\sigma(1 + \sigma(x)) = 1-\sigma(1 + x) \mbox{\ for all \ } x \in D\setminus\{0,1\}.\]

 Unfortunately, it is not easy to describe the near-domains associated
to the construction of sharply $2$-transitive groups in \cite{RST,TZ}.
However, it turns out that
we can construct sharply $3$-transitive groups directly: 

Suppose that we have a group $G$ acting on a set $X$ in such a way that $3$-point stabilizers are trivial and that involutions fix a unique point. Then we can extend $G$ and $X$ to a sharply $3$-transitive action. As before we pick a
triple of distinct elements $(a,b,c)$ and we want to \emph{join} this triple to any other triple $(x,y,z)$ of distinct elements by a unique group element.
For simplicity we assume that the setwise stabilizer of $\{a,b,c\}$ in the group $G$ acts on the triple $(a,b,c)$ as $\Sym(3)=S_3$,
the symmetric group on three letters. We also assume that for any triple $(x,y,z)$ whose setwise stabilizer is isomorphic to $S_3$ there already exists a (unique)  $g\in G$ with $(a,b,c)^g=(x,y,z)$.

In order to construct the extension of $G$ and $X$, we here have to distinguish several cases: if the  triple $(x,y,z)$ is not
invariant under any group element, we extend the group $G$ by a free generator
taking $(a,b,c)$ to $(x,y,z)$. If $(x,y,z)$ is invariant under a single involution $\iota$, we extend $G$ by an HNN-extension conjugating one of the involutions of $G_{(a,b,c)}$ to $\iota$. If $(x,y,z)$ is invariant under 
an element $\tau$ of order three, we take an HNN-extension conjugating an
element of $G_{(a,b,c)}$ of order three  to $\tau$.
In the limit, the group we obtain will be sharply $3$-transitive of characteristic $2$ on some appropriate set (see \cite{Tent3} for details).

It is tempting to try to construct sharply $4$-transitive groups with a similar method. So one could fix a quadrupel of
points, assume that the stabilizer of this quadrupel is $S_4$ and try to extend the group. However -- luckily! -- this method fails:  if
we consider  two other quadrupels of points which are not yet joined to the fixed one, each invariant under some involution, then we need to join them by HNN-extensions that respect both relations and this cannot be done
in a free way.

\section{Other characteristics and further directions}

So far, the constructions described all lived in characteristic $2$. In other characteristics
we have to take the fixed points of involutions into account. In the construction of
sharply $2$-transitive groups described above if involutions have fixed points, then we cannot just extend the group action freely. In characteristic $0$,  recent progress shows \cite{RT} that this works. Since in this case, the action of the group $G$ on the underlying set is equivalent to the action by conjugation on the involutions of $G$, the situation is more canonical.

On the other hand, the problem is that if $G$ acts sharply $2$-transitively on $X$, then by the equivariant bijection between the set of involutions of $G$ and the underlying set $X$ 
we see that $G$ acts transitively on the set of products of two distinct involutions.

If the characteristic is different from $2$, then in particular all pairs of distinct involutions generate isomorphic dihedral groups. Adding free factors will necessarily
force the dihedral groups to be infinite. This allows us to construct sharply $2$-transitive groups of characteristic $0$ with a similar approach as in the characteristic $2$ case.

However, in characteristic $p>2$, the situation is much more difficult: not only are there no non-split sharply $2$-transitive groups in characteristic $p=3$ by Kerby's result mentioned above. But in the open cases with $p>3$, if we start with a group in which any pair of 
distinct involutions generates a dihedral group $D_{2p}$ of order $2p$ for some prime $p>3$, then extending the group in such a way that this property is preserved
demands great care as it bears similarity to the restricted Burnside problem.

The examples constructed so far suffice to conclude that the classes of sharply $2$- and $3$-transitive groups, respectively, are
wild as they include \emph{free} constructions. As far as classifications are concerned, this forces us to look in more
detail at more specific settings, as the list of splitting results shows that there are many natural classes in which sharply $2$-transitive groups split. One of the most urgent maybe the question whether there are non-split sharply $2$-transitive groups of finite Morley rank. This question might even have direct connections with
the existence of so-called 'bad' groups, i.e. simple groups of finite Morley rank which are not isomorphic to algebraic groups over algebraically closed fields.

We hope that the recent constructions of these examples inspire new interest and a fresh look at these natural questions.




\begin{thebibliography}{99}

 

\bibitem[BN]{BN} A.~Borovik, A.~Nesin, {\it Groups of finite Morley rank,} Oxford Logic Guides, 26. 
Oxford Science Publications. The Clarendon Press, Oxford University Press, New York, 1994.

\bibitem[dMW]{dMW} T.~de Medts, R.~Weiss, {\it
Moufang sets and Jordan division algebras}, Math. Annalen 335 (2006), 415-433. 
 
\bibitem[GMS]{GMS} G.~Glauberman, A.~Mann, Y.~Segev, {\it  A note on groups generated by involutions
and sharply $2$-transitive groups,} to appear in Proc.~Amer.~Math.~Soc.

\bibitem[GlGu]{GlGu} Y.~Glasner, D.~Gulko, {\it Sharply~two~transitive~linear~groups,} to appear in
Int.~Math.~Res.~Not.

\bibitem[Ca]{Cameron}
Peter J. Cameron, {\it Bases in permutation groups}
in: Automorphisms of first-order structures.
Edited by Richard Kaye and Dugald Macpherson. Oxford Science Publications.
The Clarendon Press, Oxford University Press, New York, 1994. xiv+386 pp


\bibitem[DM]{DM}
J. D. Dixon, B.  Mortimer,
{\it Permutation groups},
Graduate Texts in Mathematics, 163. Springer-Verlag, New York, 1996. xii+346 pp. 


\bibitem[Ha]{hall}
M. Hall,
{\it On a theorem of Jordan}, Pacific J. of Math. 4 (1954) 219 -- 226.


\bibitem[Jo]{Jordan}
C. Jordan, Recherches sur les substitutions, J. Math. Pures Appl. (2) 17 (1872)
351-363. 

\bibitem[Ke]{Kerby}
W. Kerby, 
{\it On infinite sharply multiply transitive groups},
Hamburger Mathematische Einzelschriften, Neue Folge, Heft 6.
Vandenhoeck \& Ruprecht, G\"ottingen, 1974. 71 pp.
 
 

\bibitem[MaKS]{MaKS} W.~Magnus, A.~Karrass, and D.~Solitar, {\it Combinatorial group theory,}
Interscience, 1966.

\bibitem[M]{M} P.~Mayr, {\it Sharply $2$-transitive groups with point stabilizer of exponent $3$ or $6$,} 
Proc.~Amer.~Math.~Soc.~{\bf 134}  (2006),  no.~1, 9--13. 

\bibitem[MK]{MK} {\it The Kourovka notebook. Unsolved problems in group theory 18,} Editors: V.~D.~Mazurov and E.~I.Khukhro,
arXiv:1401.0300 (2014) 


\bibitem[Ne]{Neumann}
B. H. Neumann,
{\it On the commutativity of addition},
J. London Math Soc. 15 (1940), 203--208. 


\bibitem[RT]{RT} E. Rips,  K. Tent,
{\it Sharply $2$-transitive group in characterstic $0$}, preprint.

\bibitem[RST]{RST} E. Rips, Y. Segev, K. Tent,
{\it A sharply $2$-transitive group without a non-trivial abelian normal subgroup}, to
appear in JEMS.

\bibitem[TZ]{TZ} K. Tent, M. Ziegler,
{\it Sharply $2$-transitive groups}, Adv. Geom. 16 (2016), no. 1, 131 - 134. 


\bibitem[Te1]{T2} K.~Tent, {\it Sharply $n$-transitive groups in o-minimal structures,} Forum Math.~{\bf 12}  (2000),  no.~1, 65 - 75. 



\bibitem[Te2]{Tent3} K.~Tent, {\it Sharply $3$-transitive groups},
 Advances in Mathematics 286 (2016) 722 - 728. 


\bibitem[Ti]{Tits}
J. Tits, Groupes triplement transitifs et generalisations. Algebre et
Theorie de nombres, Coll. Int. du Centre Nat. de la Rech. Sci. no. 24 (1950) 207 -- 208.

\bibitem[Ti]{Ti} J.~Tits {\it Sur les groupes doublement transitifs continus,} Comm.~Math.~Helv.~26 (1952),
203--224.

\bibitem[Tu]{Tu} S.~T{\"u}rkelli, {\it Splitting of sharply 2-transitive groups of characteristic 3,}
Turkish J.~Math.~{\bf 28}  (2004),  no. 3, 295--298.

\bibitem[W]{W}  H.~W{\"a}hling, {\it Lokal endliche, scharf zweifach transitive Permutationsgruppen,} (German)
[Locally finite, sharply doubly transitive permutation groups] Abh.~Math.~Sem.~Univ.~Hamburg  {\bf 56}  (1986), 107--113.
\bibitem[Z1]{Z1} H. Zassenhaus,
{\it \"Uber endliche Fastk\"orper},
Abh. Math. Sem. Hamburg, {\bf 11} (1936), 187 -- 220.

\bibitem[Z2]{Z2} H. Zassenhaus,
{\it Kennzeichnung endlicher linearer Gruppen als Permutationsgruppen},
Abh. Math. Sem. Hamburg, {\bf 11} (1936), 17 -- 40.



\end{thebibliography}

\vspace{.5cm}

\noindent\parbox[t]{15em}{
Katrin Tent,\\
Mathematisches Institut,\\
Universit\"at M\"unster,\\
Einsteinstrasse 62,\\
D-48149 M\"unster,\\
Germany,\\
{\tt tent@wwu.de}}

\end{document}